\documentclass[a4paper,12pt]{amsart}
\usepackage{amssymb}
\usepackage{amsmath}
\usepackage{graphics}
\usepackage{soul}
\usepackage{xcolor}

\usepackage[dvips]{graphicx}
\newtheorem{thm}{Theorem}

\def\C{{\mathbb C}}

\def\D{{\mathbb D}}
\def\R{{\mathbb R}}
\setlength{\oddsidemargin}{0in} \setlength{\evensidemargin}{0in}
\setlength{\textwidth}{6in} \setlength{\textheight}{8.5in}

\newcommand{\RR}{{\mathbb R}}

\def\be{\begin{equation}}
\def\ee{\end{equation}}

\def\C{{\mathbb C}}

\def\D{{\mathbb D}}
\def\R{{\mathbb R}}


\makeatletter

\begin{document}

\title[On the Conformal Lens Maps]{On the Conformal Lens Map}

\thanks{The
authors were partially supported by Fondecyt Grants \# 1190756.
\endgraf  {\sl Key words:} {Convex Mappings, Lens Maps.}
\endgraf {\sl 2020 AMS Subject Classification}. Primary: 30C45, 30C99; \,
Secondary: 31C05.}

\author{Vicente Ahumada}
\address{Facultad de Ingenier\'ia y Ciencias\\
Universidad Adolfo Ib\'a\~nez\\
Av. Padre Hurtado 750, Vi\~na del Mar, Chile.}
\email{vahumada@alumnos.uai.cl}

\author{Eduardo Behm}
\address{Facultad de Ingenier\'ia y Ciencias\\
Universidad Adolfo Ib\'a\~nez\\
Av. Padre Hurtado 750, Vi\~na del Mar, Chile.}
\email{ebehm@alumnos.uai.cl}

\author{Rodrigo Hern\'andez }
\address{Facultad de Ingenier\'ia y Ciencias\\
Universidad Adolfo Ib\'a\~nez\\
Av. Padre Hurtado 750, Vi\~na del Mar, Chile.}
\email{rodrigo.hernandez@uai.cl}

\author{Dubalio P\'erez }
\address{Facultad de Ingenier\'ia y Ciencias\\
Universidad Adolfo Ib\'a\~nez\\
Av. Padre Hurtado 750, Vi\~na del Mar, Chile.}
\email{duperez@alumnos.uai.cl}


\begin{abstract} It is well-known that lens maps are convex mappings defined in the unit disc to itself. In this brief note, we show that these mappings are convex of order $\alpha>0$, and starlike of order $\beta>0$, and establish the precise orders in terms of opening angle of the lens map.  
\end{abstract}

\maketitle

\section{Introduction}

The study of conformal functions has been very extensive in the last century and many applications can be described by these functions. For instance, the author in \cite{FS} uses these kind of mappings to understand the visual field, which is represented in the brain by mappings which are, at least approximately, conformal. Since conformal mappings preserve essential visual information, \cite{Baricco} states that they are used for the solutions of 2D potential problems on conservative fields, while they are also used for face recognition and texture processing in computer graphics. For more detail conformal mappings, see \cite{NEHARI_CONFORMAL}. In these brief notes, we will study some geometric aspects of what we call \textit{lens maps} such as the order of starlikeness and convexity.\\

Let $\zeta$ be the complex function defined in the unitary disc, where $z\in\mathbb{D}$: $\zeta(z)=(1+z)/(1-z)$ as a M\"obius transformation that maps the disc into a half plane. Now, we can define the \textit{lens function} with the parameter $\alpha\in(0,1]$ as
\begin{equation}\label{def-lens}
    \ell_{\alpha}(z)=\frac{\xi-1}{\xi +1}  =\frac{\zeta^\alpha-1}{\zeta^\alpha +1} = \frac{\left(\dfrac{1+z}{1-z}\right)^{\alpha}-1}{\left(\dfrac{1+z}{1-z}\right)^{\alpha} +1}.
\end{equation} To see some application of lens maps read \cite{ChHM}.

Let $\mathcal{C}$ be the class of holomorphic and univalent functions  normalized by the conditions $f(0)=0$ and $f'(0)=1$ that transform the unitary disc into a convex region in the complex plane. Then, an important characterization  of convex functions is that $f\in\mathcal{C}$ if and only if
\begin{equation}\label{convex}
    \operatorname{Re}\left\{1+z\frac{f''(z)}{f'(z)}\right\}>0.
\end{equation} For more details on convex mappings, refer to\cite{DUREN_UF,GK,NEHARI76}. One of the many extensions of the class of convex functions was provided by Robertson in \cite{ROBERTSON}: $f$ is a convex function of order $\alpha$, with $0\leq\alpha<1$, if
\begin{equation*}\label{convex_alpha}
    \RR\left\{1+z\,\dfrac{f''}{f'}(z)\right\}>\alpha.
\end{equation*}
We denote the class of convex function of order $\alpha$ as $\mathcal{C}_\alpha$. Observe that $\mathcal{C}_0=\mathcal{C}$ and $\mathcal{C}_\alpha\subset \mathcal{C}$ for all $0\leq\alpha<1$. Robertson \cite{ROBERTSON} observed that functions $f$ in $\mathcal C_\alpha$, have the following geometric property: the ratio of the angle between adjacent tangents of the unit circle over the
angle between the corresponding tangents in the image of $f$ is less than $1/\alpha$. Hence the closer $\alpha$ is to 1 the ``rounder'' is the image. For instance, segments in the boundary of $f(\D)$ are prohibited as soon as $\alpha > 0$.

\section{Results}

It is a well-known result that lens maps are bounded convex mappings. However, it is not easy to see that actually is a convex mapping with order $\beta>0$. This is the main purpose of this notes. In fact, lens maps are a starlike maps for every $\alpha\neq0$ and $\alpha\in [-2,2]$, and convex when $\alpha\in[-1,1]$. In this manuscript we find the order of starlikness and convexity of the lens maps in terms of the parameter $\alpha$.

\subsection{Starlikeness}

Let $f:\D\to\C$ be a locally univalent analytic mapping normalized as before, such that the range of $f(\D)$ is a starlike domain with respect to the origin. A well known analytical characterization of starlikeness is given by $$\mbox{Re}\left\{z\frac{f'}{f}(z)\right\}>0.$$ See \cite{DUREN_UF,GK}. For lens map $\ell_\alpha$, it is easy to see that $$z\frac{\ell_\alpha'}{\ell_\alpha}(z)=\frac{4\alpha z}{1-z^2}\cdot \frac{\left(\dfrac{1+z}{1-z}\right)^\alpha}{\left(\dfrac{1+z}{1-z}\right)^{2\alpha}-1}=\dfrac{4\alpha z(1-z^2)^{\alpha-1}}{(1+z)^{2\alpha}-(1-z)^{2\alpha}}.$$ Thus, if $\alpha\in(1,2]$ this quantity tends to zero when $z$ goes to $\pm 1$, then $$\inf_{z\in \D}\mbox{Re}\left\{z\frac{\ell_\alpha'}{\ell_\alpha}(z)\right\}=0,$$ which means that $\ell_\alpha$ is starlike map with order 0. Since $\ell_{\alpha}(z)=\ell_{-\alpha}(-z)$ then, for $\alpha\in[-2,-1)$, $$\inf_{z\in\D}\mbox{Re}\left\{z\frac{\ell_\alpha'}{\ell_\alpha}(z)\right\}=\inf_{w\in\D}\mbox{Re}\left\{w\frac{\ell_{-\alpha}'}{\ell_{-\alpha}}(w)\right\}=0.$$ Now, for $\alpha\in[-1,1]$ the corresponding lens map is a convex mapping, therefore is a starlike map with order $1/2$ at least. This is the fact in the following theorem.

\begin{thm} For $\alpha\in[-1,1]$, $\ell_\alpha$ is a starlike mapping with order $\beta$, given by $$\beta=\dfrac{\alpha}{\sin\left(\dfrac{\pi}{2}\alpha\right)}.$$   
\end{thm}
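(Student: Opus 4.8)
The plan is to pass to the logarithmic coordinate $w=\log\frac{1+z}{1-z}$, which maps $\D$ conformally onto the strip $S=\{w:|\operatorname{Im} w|<\pi/2\}$; since $\zeta=e^{w}$ one has $z=\tanh(w/2)$ and $\ell_{\alpha}=\tanh(\alpha w/2)$. Differentiating, and using $z/(1-z^{2})=\tfrac12\sinh w$ together with $\tfrac12\sinh u=\sinh(u/2)\cosh(u/2)$, I would obtain the clean formula
\[
P(z):=z\,\frac{\ell_{\alpha}'}{\ell_{\alpha}}(z)=\frac{\alpha\sinh w}{\sinh(\alpha w)},\qquad w\in S.
\]
For $\alpha\in[-1,1]$ the lens map is starlike, so $\operatorname{Re} P>0$ on $\D$; also $P$ is analytic on $\D$ (the only zero of $\sinh(\alpha w)$ in $\overline S$ is the removable one at $w=0$), and from the boundary formula below $\operatorname{Re} P$ extends continuously to $\partial\D\setminus\{-1,1\}$ with $\operatorname{Re} P(z)\to+\infty$ as $z\to\pm1$. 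Extending $\operatorname{Re} P$ by $+\infty$ at $\pm1$ makes it lower semicontinuous on the compact set $\overline\D$; it therefore attains its minimum, which is finite and, by the minimum principle applied to the harmonic (and non-constant, for $\alpha\ne1$) function $\operatorname{Re} P$, is attained on $\partial\D\setminus\{-1,1\}$. So it suffices to minimize $\operatorname{Re} P$ over $\partial\D$, i.e. over the two lines $\operatorname{Im} w=\pm\pi/2$.

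On $w=x+i\pi/2$ one has $\sinh(x+i\pi/2)=i\cosh x$ and $\sinh(\alpha x+i\gamma)=\sinh(\alpha x)\cos\gamma+i\cosh(\alpha x)\sin\gamma$ with $\gamma:=\alpha\pi/2$, so a short computation gives
\[
\operatorname{Re} P=\frac{\alpha\sin\gamma\,\cosh x\cosh(\alpha x)}{\sinh^{2}(\alpha x)\cos^{2}\gamma+\cosh^{2}(\alpha x)\sin^{2}\gamma}=\frac{\alpha\sin\gamma\,\cosh x\cosh(\alpha x)}{\sin^{2}\gamma+\sinh^{2}(\alpha x)},
\]
the last step using $\cosh^{2}=1+\sinh^{2}$; the line $\operatorname{Im} w=-\pi/2$ gives the same values by conjugation symmetry. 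At $x=0$ (that is, $z=i$) this equals $\alpha/\sin\gamma=\beta$, while as $|x|\to\infty$ it grows like $\alpha\sin\gamma\,e^{(1-\alpha)|x|}\to+\infty$. Hence the theorem reduces to the single inequality
\[
\sin^{2}\!\left(\tfrac{\alpha\pi}{2}\right)\bigl[\cosh x\cosh(\alpha x)-1\bigr]\ \ge\ \sinh^{2}(\alpha x),\qquad x\in\R,\ 0<\alpha\le1,
\]
with equality at $x=0$: this shows $\operatorname{Re} P\ge\beta$ on $\D$ and, together with the value at $z=i$, pins the infimum at exactly $\beta$.

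To prove the displayed inequality I would recast it as $\sup_{x>0}\psi(x)\le\sin^{2}(\alpha\pi/2)$, where $\psi(x):=\sinh^{2}(\alpha x)\big/(\cosh x\cosh(\alpha x)-1)$. Expanding in powers of $x^{2}$,
\[
\sinh^{2}(\alpha x)=\sum_{k\ge1}\frac{(2\alpha)^{2k}}{2\,(2k)!}\,x^{2k},\qquad \cosh x\cosh(\alpha x)-1=\sum_{k\ge1}\frac{(1+\alpha)^{2k}+(1-\alpha)^{2k}}{2\,(2k)!}\,x^{2k},
\]
so $\psi$ is a quotient of power series in $x^{2}$ whose coefficient ratio $(2\alpha)^{2k}\big/\bigl((1+\alpha)^{2k}+(1-\alpha)^{2k}\bigr)$ is non-increasing in $k$; writing $p=(\tfrac{1+\alpha}{2\alpha})^{2}>1$ and $q=(\tfrac{1-\alpha}{2\alpha})^{2}$, this amounts to $p^{k}+q^{k}$ being non-decreasing, which for $q\ge1$ is immediate and for $q<1$ follows from $p^{k}\ge1$, $q^{k}\le q$ and $(p-1)-q(1-q)\ge0$ — the last quantity having the sign of $(1-\alpha)(15\alpha^{3}+3\alpha^{2}-3\alpha+1)\ge0$. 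By the classical monotonicity lemma for quotients of power series with monotone coefficient ratios (Biernacki--Krzy\.z), $\psi$ is non-increasing on $(0,\infty)$, so $\sup_{x>0}\psi(x)=\lim_{x\to0^{+}}\psi(x)=2\alpha^{2}/(1+\alpha^{2})$. Finally, the elementary bound $2\alpha^{2}/(1+\alpha^{2})\le\sin^{2}(\alpha\pi/2)$ on $(0,1]$ — equivalently, with $t=\alpha\pi/2$, the inequality $h(t):=\sin t\big/\sqrt{1+\cos^{2}t}\ge2t/\pi$ on $(0,\pi/2]$ — follows from $h$ being convex on $(0,\pi/4)$ and concave on $(\pi/4,\pi/2)$ (its second derivative has the sign of $2\cos^{2}t-1$), combined with $h(0)=0$, $h'(0)=1/\sqrt2>2/\pi$, $h(\pi/4)=1/\sqrt3>1/2$ and $h(\pi/2)=1$.

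Assembling these, $\inf_{\D}\operatorname{Re} P=\beta=\alpha/\sin(\alpha\pi/2)$ for $0<\alpha\le1$, so $\ell_{\alpha}$ is starlike of order $\beta$ and of no larger order; for $\alpha\in[-1,0)$ both $P$ and $\beta$ are unchanged by $\alpha\mapsto-\alpha$ (oddness of $\sinh$), $\alpha=\pm1$ is the identity map, and $\alpha=0$ is the degenerate limit consistent with the classical order $2/\pi$ of $\frac12\log\frac{1+z}{1-z}$. The hard part will be the last displayed inequality, specifically its two ingredients: the monotonicity of $\psi$ (resting on the coefficient-ratio estimate, hence on the positivity of $15\alpha^{3}+3\alpha^{2}-3\alpha+1$) and the transcendental bound $2\alpha^{2}/(1+\alpha^{2})\le\sin^{2}(\alpha\pi/2)$; everything preceding it is a careful but routine computation.
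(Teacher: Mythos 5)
Your proposal is correct, and it reaches the paper's value $\beta=\alpha/\sin(\pi\alpha/2)$ by a genuinely different route. The paper stays in the $z$-variable: it parametrizes $\partial\D$ by $t$, expresses $\operatorname{Re}\{z\ell_\alpha'/\ell_\alpha\}$ through $b(t)=|\xi(t)|$, changes variables to a one-variable function $g(x)$ on $(0,\infty)$, and locates the minimum at $x=1$ (your $x=0$, i.e.\ $z=i$) by a derivative-sign argument. You instead pass to the strip coordinate $w=\log\frac{1+z}{1-z}$, where the starlikeness quotient collapses to the closed form $\alpha\sinh w/\sinh(\alpha w)$; this makes the boundary values transparent, reduces everything to the inequality $\sin^2(\tfrac{\alpha\pi}{2})[\cosh x\cosh(\alpha x)-1]\ge\sinh^2(\alpha x)$, and you prove that via the Biernacki--Krzy\.z coefficient-ratio lemma together with the elementary bound $2\alpha^2/(1+\alpha^2)\le\sin^2(\alpha\pi/2)$ (your convexity/concavity argument for $h(t)=\sin t/\sqrt{1+\cos^2 t}$ does close this). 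What each buys: the paper's computation is more elementary, but its key monotonicity claim for $g$ is only asserted --- the two terms of the product rule have opposite signs on $(0,1)$, so the sign of $g'$ does not follow termwise as written --- whereas your series argument gives a complete proof of the corresponding monotonicity, and the $\sinh$ formula also explains for free the symmetry under $\alpha\mapsto-\alpha$ and the limiting order $2/\pi$ as $\alpha\to0$, at the cost of invoking a classical lemma. One small point to make explicit in your reduction to the boundary: for lower semicontinuity of the extension at $\pm1$ you need $\operatorname{Re}P\to+\infty$ as $z\to\pm1$ from inside $\overline\D$, not just along $\partial\D$; this does hold (for $\alpha<1$, $P\approx\alpha e^{(1-\alpha)w}$ as $\operatorname{Re}w\to\pm\infty$ in the closed strip, with $\operatorname{Re}P\gtrsim\alpha\cos((1-\alpha)\pi/2)e^{(1-\alpha)|\operatorname{Re}w|}$), or you can sidestep it entirely with the standard minimum principle for harmonic functions bounded below with a finite exceptional boundary set, since $\operatorname{Re}P>0$.
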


\begin{proof} A direct calculation shows that, for $|z|=1$ with $z\neq 1$ we have that  $$\mbox{Re}\left\{z\frac{\ell_\alpha'}{\ell_\alpha}(z)\right\}=2\alpha\cdot\mbox{Im}\left\{\dfrac{z}{1-z^2}\right\}\left(\dfrac{\mbox{Im}\{\xi\}}{|\xi-1|^2}+\dfrac{\mbox{Im}\{\xi\}}{|\xi+1|^2}\right).$$ By the symmetry of the lens maps, we consider $z=e^{it}$ with $t\in(0,\pi)$ and its follows that $$\operatorname{Im}\left\{\frac{z}{1-z^2} \right\} =\frac{2\operatorname{Im}\{z \}}{\lvert 1-z^2 \rvert ^2}=\frac{1}{2\sin(t)},$$ and 
$$\frac{\operatorname{Im}\{\xi\}}{\lvert 1 + \xi \rvert^2} = \frac{b(t)\sin(\frac{\pi}{2}\alpha)}{1+2b(t)\cos(\frac{\pi}{2}\alpha)+b(t)^2},$$ and $$\frac{\operatorname{Im}\{\xi\}}{\lvert 1 - \xi \rvert^2} = \frac{b(t)\sin(\frac{\pi}{2}\alpha)}{1-2b(t)\cos(\frac{\pi}{2}\alpha)+b(t)^2}.$$ where $b(t)=|\xi(t)|=(\sin(t)/(1-\cos(t)))^\alpha=((1+\cos(t))/\sin(t))^\alpha$. Thus \begin{equation}\label{op.starlike}\mbox{Re}\left\{z\frac{\ell_\alpha'}{\ell_\alpha}(z)\right\}=\alpha\sin\left(\dfrac{\pi}{2}\alpha\right)\frac{b(t)}{\sin(t)}\left(\dfrac{2(b^2(t)+1)}{(b^2(t)+1)^2-4b^2(t)\cos^2\left(\dfrac{\pi}{2}\alpha\right)}\right),\end{equation} then $$\beta=\alpha\sin\left(\dfrac{\pi}{2}\alpha\right)\cdot\inf_{t\in(0,\pi)}\frac{b(t)}{\sin(t)}\left(\dfrac{2(b^2(t)+1)}{(b^2(t)+1)^2-4b^2(t)\cos^2\left(\dfrac{\pi}{2}\alpha\right)}\right).$$ The relationship between $b(t)$ and $\sin(t)$ is given by \begin{equation}\label{relation-b-sin}\sin(t)=\dfrac{2 b(t)^{\frac{1}{\alpha}}}{1+b(t)^{\frac{2}{\alpha}}},\quad t\in(0,\pi).\end{equation} Therefore, for any $t\in(0,\pi)$, let $x=b(t)$ be the real number that decrease from $\infty$ to $0$, then the right hand in (\ref{op.starlike}) is given by $$\alpha\sin\left(\dfrac{\pi}{2}\alpha\right)\cdot\frac{(1+b^{2/\alpha}(t))(b^2(t)+1)}{b^{1/\alpha-1}(t)\left((b^2(t)+1)^2-4b^2(t)\cos^2\left(\dfrac{\pi}{2}\alpha\right)\right)},$$ which goes to infinity when $b(t)$ tends to 0, and when $b(t)$ tends to $\infty$. Hence, the infimum is attained for some $t\in(0,\pi)$, therefore it is a minimum named $m(\alpha)$. Thus $$\beta =\alpha \sin\left(\frac{\alpha\pi}{2}\right)\cdot m(\alpha).$$ 

Let $g(x):(0,\infty)\to\R$ the real functions given by $$g(x)=\frac{x\left(1+x^2\right) }{\left(1+x^2\right)^2-4 x^2 \cos^2\left(\frac{\alpha \pi }{2}\right)}\cdot \frac{\left(1+x^{2/\alpha}\right)}{x^{\frac{1}{\alpha}}}=f(x)\cdot\frac{\left(1+x^{2/\alpha}\right)}{x^{\frac{1}{\alpha}}} ,$$ then 
$$ g'(x)=f'(x)\cdot\frac{\left(1+x^{2/\alpha}\right)}{x^{\frac{1}{\alpha}}}+f(x)\cdot \frac{2(x^{1/\alpha-1}-x^{3/\alpha-1})}{\alpha(1+x^{2/a})^2}.$$ Here, $$f'(x)=\frac{2\left((3x^2+1)\left(\left(1+x^2\right)^2-4 x^2 \cos^2\left(\frac{\alpha \pi }{2}\right)\right)-(x^3+x)(4x(x^2+1)-8x\cos^2(\frac{\pi \alpha}{2}))\right)}{\left(\left(1+x^2\right)^2-4 x^2 \cos^2\left(\frac{\alpha \pi }{2}\right)\right)^2}$$ A straightforward calculation gave as the $$f'(x)=\dfrac{2(1-x^2)\left((1+x^2)^2+4x^2\cos^2(\frac{\pi \alpha}{2} )\right)}{\left(\left(1+x^2\right)^2-4 x^2 \cos^2\left(\frac{\alpha \pi }{2}\right)\right)^2}.$$
Thus, evaluating at $x=1$ we have that $$g'(1)=2f'(1)=0.$$ Moreover, for all $x\in(0,1)$ we have that $g'(x)<0$ and for all $x>1$ it is follows that $g'(x)>0$, therefore $g(x)$ has a unique critical point at $x=1$ for positive real line, which is a minimum. Thus $$g(x)\geq g(1)=\frac{2}{4-4\cos^2(\frac{\pi \alpha}{2})}\cdot 2=\frac{1}{\sin^2(\frac{\pi \alpha}{2})}=m(\alpha),$$ so the proof is finished.
\end{proof}

\subsection{Convexity}
Since, $\ell_\alpha$ is an analytic convex mappings, the real part is an harmonic mappings with lower bound 0, therefore the minimum value of this real part is attained in some point in $|z|=1$, then $$\beta=\inf_{z\in\D}\mbox{Re}\left\{1+\frac{z \ell_a''}{\ell_a'}(z)\right\}=\min_{z\in\partial\D}\mbox{Re}\left\{1+\frac{z \ell_a''}{\ell_a'}(z)\right\}.$$

A direct calculation shows that 
\begin{equation}
\frac{\ell_a''}{\ell_a'}
=\frac{\xi^{\prime}}{\xi}+\frac{2 z}{1-z^2}-\frac{2 \xi^{\prime}}{\xi+1}=\frac{2 \alpha}{1-z^2}+\frac{2 z}{1-z^2}-\frac{4 \alpha \xi}{\left(1-z^2\right)(\xi+1)},
\end{equation}
thus

\begin{align*}
1+\frac{z \ell_a''}{\ell_a'}(z) &=1+\frac{2 z^2}{1-z^2}+\frac{2 \alpha z}{1-z^2}-\frac{4 \alpha z \xi}{\left(1-z^2\right)(1+z)} \\
&=\frac{1+z^2}{1-z^2}+\frac{2 \alpha z(1+\xi)-4 \alpha \xi z}{\left(1-z^2\right)(1+\xi)} \\
&=\frac{1+z^2}{1-z^2}+\frac{2 \alpha z-2 \alpha z \xi}{\left(1-z^2\right)(1+\xi)} \\
&=\frac{1+z^2}{1-z^2}-\frac{2 \alpha z}{1-z^2} \cdot \frac{\xi-1}{\xi+1}.
\end{align*} Therefore, taking the real part in the unit circle $|z|=1$ with $z\neq\pm 1$, we have that  
\begin{equation*}
\operatorname{Re}\left\{\frac{1+z^2}{1-z^2}-\frac{2 \alpha z}{1-z^2} \cdot \frac{\xi-1}{\xi+1}\right\}
=-2 \alpha\operatorname{Re}\left\{\frac{z}{1-z^2} \cdot \frac{\xi-1}{\xi+1}\right\}.
\end{equation*} Since $\operatorname{Re}\left\{z/(1-z^2)\right\} = 0 $ when $|z|=1$, then
\begin{equation*}
-2 \alpha\operatorname{Re}\left\{\frac{z}{1-z^2} \cdot \frac{\xi-1}{\xi+1}\right\} 
= 2\alpha\operatorname{Im}\left\{\frac{z}{1-z^2}\right\}\operatorname{Im}\left\{\frac{\xi -1}{\xi +1}\right\}
\end{equation*}
Then, we are searching for:
\begin{equation*}
\min_{z\in\partial\D}\left\{ 2\alpha\operatorname{Im}\left\{\frac{z}{1-z^2}\right\}\operatorname{Im}\left\{\frac{\xi -1}{\xi +1}\right\}\right\}.
\end{equation*} Is not difficult to see that, for all $|z|=1$ we have that $$\operatorname{Im}\left\{\frac{z}{1-z^2} \right\} = \frac{\operatorname{Im}\{z \}(1+|z|^2)}{\lvert 1-z^2 \rvert ^2} = \frac{2\operatorname{Im}\{z \}}{\lvert 1-z^2 \rvert ^2},$$ and $$\operatorname{Im}\left\{\frac{\xi -1}{\xi +1} \right\} = \operatorname{Im}\left\{\frac{(\xi-1)(\Bar{\xi} +1)}{\lvert \xi +1 \rvert ^2}\right\}= \frac{2\operatorname{Im}\{\xi \}}{\lvert \xi +1 \rvert ^2}.$$ Thus, an alternative version of the real part, for all $z\in\partial\D$, is given by $$\operatorname{Re}\left\{1+\frac{z \ell_a''}{\ell_a'}(z)\right\}=8\alpha \frac{\operatorname{Im}\{z\} \operatorname{Im}\{\xi\}}{\lvert \xi +1 \rvert ^2 \lvert 1 -z^2 \rvert ^2}.$$

The following theorem is the main results of this manuscript.

\begin{thm} $\ell_\alpha$ is a convex mapping with order $\beta$, given by
$$\beta=\dfrac{\alpha\sin\left(\dfrac{\pi}{2}\alpha\right)}{1+\cos\left(\dfrac{\pi}{2}\alpha\right)}.$$
\end{thm}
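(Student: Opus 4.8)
The plan is to run the same program as in the proof of the starlikeness theorem, now starting from the boundary identity already obtained above: for $z\in\partial\D$ with $z\neq\pm1$,
$$\operatorname{Re}\left\{1+\frac{z \ell_\alpha''}{\ell_\alpha'}(z)\right\}=8\alpha\,\frac{\operatorname{Im}\{z\}\,\operatorname{Im}\{\xi\}}{\lvert \xi +1\rvert^2\,\lvert 1-z^2\rvert^2}.$$
Since $\ell_\alpha$ is convex, the left-hand side is a nonnegative harmonic function on $\D$, so its infimum over $\D$ is attained on the boundary; hence $\beta$ is the minimum of the right-hand side over $\partial\D$. The Taylor coefficients of $\ell_\alpha$ are real, so this real part is invariant under $z\mapsto\bar z$ and it suffices to minimize over $z=e^{it}$, $t\in(0,\pi)$. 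The case $\alpha=1$ is the identity map, giving the constant value $1$, which agrees with the formula; for $\alpha\in[-1,0)$ one uses $\ell_\alpha(z)=\ell_{-\alpha}(-z)$, so we may assume $\alpha\in(0,1)$.

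First I would evaluate the three factors on $z=e^{it}$. One has $\operatorname{Im}\{z\}=\sin t$ and $\lvert 1-z^2\rvert^2=4\sin^2 t$, while a short computation gives $\zeta(e^{it})=i\cot(t/2)$, whence $\xi=\zeta^{\alpha}=b(t)\,e^{i\pi\alpha/2}$ with $b(t)=\cot^{\alpha}(t/2)=\big((1+\cos t)/\sin t\big)^{\alpha}$, exactly the function $b(t)$ of the starlikeness proof. Therefore $\operatorname{Im}\{\xi\}=b(t)\sin(\pi\alpha/2)$ and $\lvert\xi+1\rvert^2=b(t)^2+2b(t)\cos(\pi\alpha/2)+1$, and substituting these into the identity gives
$$\operatorname{Re}\left\{1+\frac{z \ell_\alpha''}{\ell_\alpha'}(z)\right\}=\frac{2\alpha\sin\left(\dfrac{\pi}{2}\alpha\right)b(t)}{\left(b(t)^2+2b(t)\cos\left(\dfrac{\pi}{2}\alpha\right)+1\right)\sin t}.$$

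Next, exactly as before, I would use relation (\ref{relation-b-sin}), $\sin t=2b(t)^{1/\alpha}/(1+b(t)^{2/\alpha})$, to eliminate $t$ in favour of $x=b(t)$, which sweeps $(0,\infty)$. The quantity to be minimized becomes $\alpha\sin\left(\dfrac{\pi}{2}\alpha\right)h(x)$, where
$$h(x)=\frac{x^{\,1-1/\alpha}\left(1+x^{\,2/\alpha}\right)}{x^2+2x\cos\left(\dfrac{\pi}{2}\alpha\right)+1}.$$
For $0<\alpha<1$ one checks that $h(x)\to\infty$ both as $x\to 0^+$ (the exponent $1-1/\alpha$ is negative) and as $x\to\infty$ (the leading power is $x^{1/\alpha-1}$), so the infimum is a minimum attained at an interior critical point. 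To locate it, the clean tool is the involution $x\mapsto 1/x$: multiplying numerator and denominator of $h(1/x)$ by $x^2$ shows $h(1/x)=h(x)$, hence $h'(1)=0$; combining this with a factored form of $h'(x)$ — of the same shape as the one displayed for $g'$ in the starlikeness proof, namely $1-x^2$ times a strictly positive function — shows $h'<0$ on $(0,1)$ and $h'>0$ on $(1,\infty)$, so $x=1$ is the global minimum. Finally $h(1)=2/\big(2+2\cos(\pi\alpha/2)\big)=1/\big(1+\cos(\pi\alpha/2)\big)$, so
$$\beta=\alpha\sin\left(\dfrac{\pi}{2}\alpha\right)h(1)=\dfrac{\alpha\sin\left(\dfrac{\pi}{2}\alpha\right)}{1+\cos\left(\dfrac{\pi}{2}\alpha\right)},$$
which is the asserted order.

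The only genuinely delicate point is the minimization of $h$: one must be sure that the interior critical point $x=1$ is a global minimum rather than a local maximum or a saddle between larger values. The growth $h\to\infty$ at both ends of $(0,\infty)$ guarantees that a global minimum exists at a critical point, and the symmetry $x\leftrightarrow 1/x$ both forces $x=1$ to be critical and reduces the monotonicity check to the interval $(0,1]$, paralleling — and being no harder than — the computation already carried out for $g$ in the starlikeness theorem; what remains is the routine differentiation of $h$ and the recognition of the sign-revealing factorization of $h'$.
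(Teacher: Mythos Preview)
Your argument follows essentially the same route as the paper's: the boundary identity, the parametrization via $b(t)=\cot^\alpha(t/2)$, the elimination of $t$ through $\sin t = 2b^{1/\alpha}/(1+b^{2/\alpha})$, and the one-variable minimization with minimum at $b=1$; the paper differs only in substituting $x=b^{1/\alpha}$ and writing $g'(x)$ explicitly as a sum of three terms each carrying the sign of $x-1$, rather than citing a single factorization. One small slip to fix: if $h'$ were literally ``$1-x^2$ times a strictly positive function'' it would be \emph{positive} on $(0,1)$, the opposite of what you then (correctly) conclude, so the sign-carrying factor must be $x^2-1$; your symmetry observation $h(1/x)=h(x)$, which is not in the paper, is a clean addition that immediately forces $x=1$ to be critical.
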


\begin{proof} Let $z = e^{it}$  and
$\xi(t) = \left(\dfrac{1+e^{it}}{1-e^{it}}\right)^\alpha$ ; $t \in [0,2\pi]$
\begin{equation}\label{def-f}
    f(t) = 2\alpha\operatorname{Im}\left\{\frac{e^{it}}{1-e^{2it}}\right\}\operatorname{Im}\left\{\frac{\xi(t) -1}{\xi(t) +1}\right\}.
\end{equation} Now, our effort will be to compute the minimum value of this positive function. Since $\zeta(t)=(1+e^{it})/(1-e^{it})=i\sin(t)/(1-\cos(t))$ and putting $\operatorname{Im}\{\xi(t)\}=b(t)\sin(\alpha\pi/2)$, we have that \begin{equation}\label{def-b}|\xi(t)| = \left | \frac{\sin (t)}{1-\cos (t)}\right|^\alpha, \quad b(t)=\left\{\begin{array}{ll}\quad\left(\frac{\sin (t)}{1-\cos (t)}\right)^\alpha, & t\in(0,\pi)\\[0.3cm]
-\left(\frac{-\sin (t)}{1-\cos (t)}\right)^\alpha, & t\in(\pi,2\pi).\end{array}\right.\end{equation} From where, for any $\alpha\in(0,1]$, $b(t)$ decrease from $\infty$ to 0 when $t$ runs from $0$ to $\pi$, and continuing been decreasing functions from 0 to $-\infty$ when $t$ goes from $\pi$ to $2\pi$. Since, imaginary parts are given by 
\begin{equation*}
    \operatorname{Im}\left\{\frac{z}{1-z^2} \right\} =\frac{2\operatorname{Im}\{z \}}{\lvert 1-z^2 \rvert ^2}=\frac{1}{2\sin(t)},
\end{equation*} and \begin{equation*}
\operatorname{Im}\left\{\frac{\xi(t) -1}{\xi(t) +1}\right\}=    \frac{2\operatorname{Im}\{\xi\}}{\lvert 1 + \xi \rvert^2} = \frac{2b(t)\sin(\frac{\pi}{2}\alpha)}{1+2b(t)\cos(\frac{\pi}{2}\alpha)+b(t)^2}.\end{equation*} Then $$f(t)=\frac{2\alpha b(t) \sin\left(\frac{\pi}{2} \alpha\right)}{1+2b(t) \cos \left(\frac{\pi}{2} \alpha\right)+b(t)^2}\cdot \frac{1}{\sin(t)}.$$ For $t\in(0,\pi)$, $b(t)$ satisfies that \begin{equation*}b(t)=\left(\dfrac{\sin(t)}{ 1 - \cos(t)}\right)^{\alpha}=\left(\dfrac{1+\cos(t)}{ \sin(t)}\right)^{\alpha}.\end{equation*}  Thus, if $t\to0^+$ then  $$f(t)=\frac{2\alpha b(t)\sin\left(\frac{\pi}{2} \alpha\right)}{1+2b(t) \cos \left(\frac{\pi}{2} \alpha\right)+b(t)^2}\cdot \frac{b(t)^{1/\alpha}}{(1+\cos(t))}\sim b(t)^{1/\alpha-1},$$ and $f(t)\to\infty$, since $b(t)$ goes to infinity. On the other hand, if $t\to\pi^-$, then $$f(t)=\frac{2\alpha\sin(t)^\alpha\sin\left(\frac{\pi}{2} \alpha\right)}{1+2b(t) \cos \left(\frac{\pi}{2} \alpha\right)+b(t)^2}\cdot \frac{1}{(1-\cos(t))^\alpha\sin(t)}\sim \dfrac{1}{\sin(t)^{1-\alpha}},$$ goes to infinity. Now, if $t\in(\pi,2\pi)$ we have that 
$$f(t)=-\frac{2\alpha (-\sin(t))^\alpha \sin\left(\frac{\pi}{2} \alpha\right)}{1-2b(t) \cos \left(\frac{\pi}{2} \alpha\right)+b(t)^2}\cdot \frac{1}{(1-\cos(t))^\alpha\sin(t)}\sim \frac{1}{(-\sin(t))^{1-\alpha}},\quad t\sim\pi^{+}.$$ Therefore $f(t)\to\infty$ when $t\to\pi^+$. Analogously, one can proved that $f(t)\to\infty$ when $t\to2\pi^-$. Hence, the minimum value of $f(t)$ is attained for $t\in(0,\pi)\cup(\pi,2\pi)$. From equation (\ref{def-b}), its follows that the relationship between $b(t)$ and $\sin(t)$ is given by \begin{equation}\label{relation-b-sin}\sin(t)=\dfrac{2 b(t)^{\frac{1}{\alpha}}}{1+b(t)^{\frac{2}{\alpha}}},\quad t\in(0,\pi);\quad \quad 
\sin(t)=\dfrac{-2 (-b(t))^{\frac{1}{\alpha}}}{1+(-b(t))^{\frac{2}{\alpha}}}, \quad t\in(\pi,2\pi).\end{equation} $b(t)^{1/\alpha}\sin(t)=1+\cos(t)$. Thus,  for all $t\in(0,\pi)$ or $t\in(\pi,2\pi)$ we obtain $$b^{\frac{1}{a}}\sin{t}= 1\pm\sqrt{1-\sin{t}^{2}},$$ which implies that $b^{\frac{2}{\alpha}}\sin^2(t)-2b^{\frac{1}{\alpha}}\sin(t)=-\sin^2(t)$, therefore $$\sin(t)=\frac{2 b(t)^{\frac{1}{\alpha}}}{1+b(t)^{\frac{2}{\alpha}}}.$$ Now, using the first part of (\ref{relation-b-sin}) we can rewrite $f(t)$ for all $t\in(0,\pi)$ as:
\begin{equation}\label{def-f(b)}
g(b)=f(t(b))=\frac{2\alpha(1+b^{\frac{2}{\alpha}})\sin{(\frac{\pi}{2}\alpha)}}{b^{\frac{1}{\alpha}-1} + 2b^{\frac{1}{\alpha}}\cos{(\frac{\pi}{2}\alpha)}+ b^{1+\frac{1}{\alpha}}}:[0,\infty)\to \R^+.\end{equation} Thus, the minimum value of $f(t)$ in $t\in(0,\pi)$ is the minimum value of $f(b)$ in $b\in(0,\infty)$. We shall show that $$\min_{b\in(0,\infty)}f(b)=f(1)=\dfrac{4\sin\left(\dfrac{\pi}{2}\alpha\right)}{1+\cos\left(\dfrac{\pi}{2}\alpha\right)}.$$ Consider $x=b^{1/\alpha}$, then $x$ left in the same interval. Thus, the corresponding function $g(x)=f(x^\alpha)/\alpha\sin(\alpha\pi/2)$ satisfies that $g:(0,\infty)\to \R$ and $$g(x)=\dfrac{1+x^2}{x^{1-\alpha}+2x\cos(\alpha\pi/2)+x^{1+\alpha}},\quad \alpha\in[0,1].$$ Thus, a straightforward calculations show that $$g'(x)=\dfrac{(1+\alpha)x^{\alpha}(x^{2-2\alpha}-1)+2\cos(\alpha\pi/2)(x^2-1)+(1-\alpha)x^{-\alpha}(x^{2+2\alpha}-1)}{(x^{1-\alpha}+\cos(\alpha\pi/2)x+x^{1+\alpha})^2},$$ which is positive when $x>1$, since every terms in the numerator of $g'(x)$ is positive, and is negative when $x<1$, for same reason as before. However, one can see that $g'(1)=0$ which implies that $g$ attained its minimum value at $x=1$. Thus, the minimum value of $f$ is $\alpha\sin(\alpha\pi/2)g(1)=f(1)$. 

Now, for values $t\in (\pi,2\pi)$ by the symmetry of lens-shape, the arguments as holds.
\end{proof}

\end{document}